%
\documentclass[11pt, reqno]{amsart}
\usepackage{amsmath, amsthm, amscd, amsfonts, amssymb, graphicx, color}
\usepackage[bookmarksnumbered, colorlinks, plainpages]{hyperref}
\hypersetup{colorlinks=true,linkcolor=red, anchorcolor=green, citecolor=cyan, urlcolor=red, filecolor=magenta, pdftoolbar=true}
\textwidth=13cm

\setlength{\oddsidemargin}{0.35in}\setlength{\evensidemargin}{0.35in}

\setlength{\topmargin}{-.5cm}

\newtheorem{theorem}{Theorem}[section]
\newtheorem{lemma}[theorem]{Lemma}

\newtheorem{corollary}[theorem]{Corollary}
\theoremstyle{definition}
\newtheorem{definition}[theorem]{Definition}
\newtheorem{example}[theorem]{Example}

\theoremstyle{remark}
\newtheorem{remark}[theorem]{Remark}
\numberwithin{equation}{section}

\begin{document}
\allowdisplaybreaks
\setcounter{page}{1}

\title[Spaceability on Some Classes of Banach Spaces]{Spaceability on Some Classes of Banach Spaces}

\author[A.R. Bagheri Salec, S. Ivkovi\'{c} and S. M. Tabatabaie]{AliReza Bagheri Salec, Stefan Ivkovi\'{c} and Seyyed Mohammad Tabatabaie}

\address{Department of Mathematics, University of Qom, Qom, Iran}
\email{\textcolor[rgb]{0.00,0.00,0.84}{r-bagheri@qom.ac.ir}}

\address{The Mathematical Institute of the Serbian Academy of Sciences and Arts,
	p.p. 367, Kneza Mihaila 36, 11000 Beograd, Serbia}
\email{\textcolor[rgb]{0.00,0.00,0.84}{stefan.iv10@outlook.com}}

\address{Department of Mathematics, University of Qom, Qom, Iran}
\email{\textcolor[rgb]{0.00,0.00,0.84}{sm.tabatabaie@qom.ac.ir}}

\address{
\newline
}

\subjclass[2010]{Primary 46E30; Secondary 43A15.\\
\indent $^{*}$Corresponding author}

\keywords{spaceability, generalized Orlicz space, generalized Lebesgue space, Banach function space}
\begin{abstract}
In this paper, we study spaceability of subsets of generalized Orlicz and Lebesgue spaces associated to a Banach function space. Also, we give some sufficient conditions for spaceability of subsets of a general Banach space which improves an important result on this topic. As an application, it is shown that the set of all bounded linear operators which are not positive semidefinite on a separable Hilbert space is spaceable.
\end{abstract} \maketitle 
\section{Introduction}
A subset of a topological vector space is called spaceable 
if its union with the singleton $\{0\}$ contains a closed infinite-dimensional linear subspace.  This concept was introduced in \cite{fon,aro} and so far has been considered by many researchers. 
As a useful tool, L. Bernal-Gonz\'alez and M.O. Cabrera in \cite[Theorem 2.2]{ber}  give some sufficient conditions for spaceability of the complement of a cone in a Banach function space. This  result covers some important ones proved in \cite{bot1, bot2}. By this tool, in \cite{SaTa19,SaTa20}
it is shown that the set $\mathcal M_q^p(\mathbb R^n)\backslash\bigcup_{q<r\leq p}\mathcal M_r^p(\mathbb R^n)$ is spaceable in the Morrey space $\mathcal M_q^p(\mathbb R^n)$, 
if $0<q<p<\infty$. Also, technically it is also proved  that  ${\rm w}{\mathcal M}^p_q({\mathbb R}^n) 
\setminus {\mathcal M}^p_q({\mathbb R}^n)$
is spaceable in the weak Morrey space ${\rm w}{\mathcal M}^p_q({\mathbb R}^n)$.
In \cite[Theorem 3.3]{kit}  D. Kitson and R. M. Timoney present another nice sufficient condition for a set to be spaceable in a Fr\'echet space. This topic has been studied in the context of some special sequence and function spaces in several papers 
(see \cite{ber2,ber1,bot3,bot1,bot2,gur,tab1} for example). 

In this paper, we focus on generalized Orlicz and Lebesgue spaces $X^\Phi$ and $X^p$ associated to a Banach function space $X$, where $\Phi$ is a Young function and $p\geq 1$. These structures were studied in \cite{cam,per4,per1,tab5} and contains usual Orlicz and Lebesgue spaces. Inspiring \cite{bot1,bot2} and as an extension of \cite[Theorem 3.3]{ber} we prove that if $X$ is a solid Banach function space, $\inf\{\|\chi_E\|_X: E\in \mathcal{A}_0\}=0$, and	$\sup\{\|\chi_E\|_X:\, E \in {\mathcal A}_\infty\}<\infty$, where ${\mathcal A}_\infty:=\{E \in {\mathcal A}:\, \chi_E\in X\}$, then for each $p\geq 1$, $X^p-\bigcup_{p<q}X^q$ is spaceable in $X^p$. This result is concluded from the technical Lemma \ref{Nielsen} which is a  generalization of \cite[Theorem 14.22]{niel}. In sequel, we give some necessary condition for inclusion of two generalized Orlicz space (as a generalization of \cite[Theorem 3 page 155]{rao}), and then prove that if the Young function $\Phi_2$ is not stronger than the other one $\Phi_1$, then $X^{\Phi_2}-X^{\Phi_1}$ is spaceable in $X^{\Phi_2}$. Finally, we give an abstract improvement of \cite[Theorem 2.2]{ber}. To emphasis the capacity of the obtained result, we apply it to  show that if $X$ is a solid Banach function space on $\Omega$ and $\inf\{\|\chi_E\|_X: E\in \mathcal{A}_0\}=0$, then for each $1\leq p,q< r$, the set $\{(f,g)\in X^p\times X^q:\,fg\notin X^r\}$ is spaceable in $X^p\times X^q$. As another application, we prove that the set of all bounded linear operators which are not positive semidefinite on a separable Hilbert space is spaceable. Moreover, it is shown that if $K$ is a two sided ideal cone in $B(\mathcal{H})$ and there exists a sequence of mutually disjoint subsets $\{J_n\}_{n\in\mathbb{N}}$ of $\mathbb{N}$ satisfying the condition $P_{J_n}KP_{J_n}\neq P_{J_n}B(\mathcal{H})P_{J_n}$ for all $n\in\mathbb{N}$, then $B(\mathcal{H})-K$ is spaceable in $B(\mathcal{H})$, where $\mathcal{H}$ is a separable Hilbert space with an orthonormal basis $\{e_j\}_{j\in\mathbb{N}}$, and $P_{J_n}$ is the orthogonal projection on the closed linear span of $\{e_j\}_{j\in J_n}$.
\section{Preliminaries}
In sequel, $(\Omega,{\mathcal A},\mu)$ is always a $\sigma$-finite measure space, and $\mathcal{M}_0(\Omega)$ is set of all $\mathcal{A}$-measurable complex-valued functions on $\Omega$.

	A linear subspace $X$ of $\mathcal M_0(\Omega)$ equipped with a given norm $\|\cdot\|_{ X}$ is called a \emph{Banach function space on $\Omega$} if $(X,\|\cdot\|_X)$ is a Banach space.  It is called \emph{solid} if for each $f\in  X$ and $g\in\mathcal M_0(\Omega)$  we have $g \in  X$
	and $\|g\|_{ X}\leq \|f\|_{ X}$ whenever $|g|\leq |f|$ a.e.

A convex function
$\Phi:[0,\infty)\rightarrow[0,\infty)$ is called a \emph{Young function} if 
$\Phi(0)=\lim\limits_{x\rightarrow 0}\Phi(x)=0$ 
and 
$\lim\limits_{x\rightarrow\infty}\Phi(x)=\infty$. 
	Let $ X$ be a Banach function space on $\Omega$. For each $f\in \mathcal{M}_0(\Omega)$ we put
	\begin{equation}
	\|f\|_\Phi:=\inf\left\{\lambda>0:\,\Phi(\frac{|f|}{\lambda})\in  X,\,\,\left\|\Phi(\frac{|f|}{\lambda})\right\|_ X\leq 1\right\}.
	\end{equation}
	Then, the set of all $f\in \mathcal{M}_0(\Omega)$ with $\|f\|_\Phi<\infty$ is denoted by $X^\Phi$.

As in \cite[Theorem 4.11]{cam}, $(X^\Phi,\|\cdot\|_\Phi)$ is a Banach function space on $\Omega$ (two functions in $X^\Phi$ which are equal almost everywhere is considered same). For each  $p\geq 1$, the function $\Phi_{0}$ defined by $\Phi_{0}(x):=x^p$ for all $x\geq 0$, is a Young function. Then, we denote $X^p:=X^{\Phi_{0}}$ and $\|\cdot\|_p:=\|\cdot\|_{\Phi_0}$. In particular, if $X:=L^1(\mu)$, then $X^\Phi=L^\Phi(\mu)$ and $X^p=L^p(\mu)$, the classical Orlicz and Lebesgue spaces.
\section{Main Results}
A subset $S$ of a Banach space $Y$ is called \emph{spaceable} in $Y$ if $S\cup\{0\}$ contains a closed  infinite-dimensional subspace of $Y$. In this section, first we study the spaceability of special subsets of $X^p$.   As in \cite{tab5}, for each function $f$ in $M_0(\Omega)$ we denote $E_f:=\{x\in \Omega: f(x)\neq0\}$. 
\begin{remark}\label{pcs}
Recall from \cite{ber} that a Banach function space $(\mathcal{E},\|\cdot\|)$ on $\Omega$ is a {\it PCS-space} if for each sequence $(f_n)$ with $f_n\rightarrow f$ in $\mathcal{E}$, there is a subsequence $(f_{n_k})$ of $(f_n)$ such that $f_{n_k}\rightarrow f$ a.e. This property plays a key role in the subject \emph{spaceability}. For instance, see Theorem \ref{32} below as a main result on this topic.  A Banach function space $X$ on a $\sigma$-finite measure space  $(\Omega,\mathcal{A},\mu)$ is PCS-space if and only if the embedding of $X$ into $\mathcal{M}_0(\Omega)$ is continuous, where $\mathcal{M}_0(\Omega)$ is equipped with the topology of convergence in measure on finite measure subsets.
If $X$ is a solid quasi-Banach function space on a $\sigma$-finite measure space 
then the embedding $X$ in $\mathcal{M}_0(\Omega)$ is always continuous, see \cite[Proposition 2.2 (i)]{okad} for the finite measure case.
\end{remark}
Next, we recall a result which was proved in \cite[Theorem 2.2]{ber}.
\begin{theorem}\label{32}
	 Let $(\mathcal{E},\|\cdot\|)$ be a Banach function space on $\Omega$ and $B$ be a nonempty
	subset of $\mathcal{E}$ such that:
	\begin{enumerate}	
	\item $\mathcal{E}$ is a PCS-space;
	\item there is a constant $k>0$ such that $\|f+g\|\geq k\,\|f\|$ for all $f,g\in \mathcal{E}$ with $E_f\cap E_g=\varnothing$;
	\item  $B$ is a cone;
	\item if $f,g\in \mathcal{E}$ such that $f+g\in B$ and  $E_f\cap E_g=\varnothing$ then $f,g\in B$;
	\item there is a sequence $\{f_n\}_{n=1}^\infty\subseteq \mathcal{E}-B$ such that for each distinct $m,n\in\mathbb{N}$, $E_{f_n}\cap E_{f_m}=\varnothing$.
	\end{enumerate}
Then, $\mathcal{E}-B$ is spaceable in $\mathcal{E}$.
\end{theorem}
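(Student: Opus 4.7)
The plan is to construct a closed infinite-dimensional subspace $V\subseteq(\mathcal{E}-B)\cup\{0\}$ by a Cantor-style packing of the disjoint-support sequence. After replacing $f_n$ with $f_n/(2^n\|f_n\|)$, which by the cone property (3) still lies in $\mathcal{E}-B$ with the same disjoint supports, I may assume $\sum_n\|f_n\|\leq 1$. I then partition $\mathbb{N}$ into countably many pairwise disjoint infinite subsets $\{N_j\}_{j\in\mathbb{N}}$ and define
$$h_j:=\sum_{n\in N_j}f_n,$$
where the series converges absolutely in $\mathcal{E}$. The $\{h_j\}$ have pairwise disjoint supports. Fixing any $n_0\in N_j$, the decomposition $h_j=f_{n_0}+(h_j-f_{n_0})$ has disjoint supports; condition (2) gives $\|h_j\|\geq k\|f_{n_0}\|>0$, so $h_j\neq 0$, and the contrapositive of (4) applied with $f_{n_0}\notin B$ yields $h_j\notin B$.

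Set $V:=\overline{\mathrm{span}}\{h_j:j\in\mathbb{N}\}$. Since the $h_j$ are linearly independent by disjoint supports, $V$ is closed and infinite dimensional. The crux is to show $V\setminus\{0\}\subseteq\mathcal{E}-B$. Given a nonzero $v\in V$, approximate $v$ in norm by finite combinations $v_k=\sum_j a_j^{(k)}h_j$ and invoke (1) to extract an a.e.\ convergent subsequence. On each $E_{h_{j_0}}$, only the $j_0$-th term survives, so $a_{j_0}^{(k)}h_{j_0}$ converges a.e.\ to $v|_{E_{h_{j_0}}}$; this forces $a_{j_0}^{(k)}$ to stabilize to some scalar $a_{j_0}$, and $v=a_{j_0}h_{j_0}$ on $E_{h_{j_0}}$. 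Since $v$ is an a.e.\ limit of functions supported in $\bigcup_j E_{h_j}$, it is itself supported there, so $v\neq 0$ forces some $a_{j_0}\neq 0$. Then $v=a_{j_0}h_{j_0}+(v-a_{j_0}h_{j_0})$ is a disjoint decomposition with $a_{j_0}h_{j_0}\notin B$, and the contrapositive of (4) gives $v\notin B$.

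The hard part I expect is the coordinate-extraction step for a generic $v\in V$: one must upgrade the norm convergence $v_k\to v$ to a.e.\ convergence, which is exactly where the PCS property (1) is essential, and then exploit the disjoint geometry to recover well-defined scalars $a_j$. A smaller subtlety is that the step $a_{j_0}h_{j_0}\notin B$ implicitly requires ``cone'' to mean $\lambda B\subseteq B$ for every nonzero scalar $\lambda$, so that negative or complex multiples of elements outside $B$ remain outside $B$; this should be built into the reading of hypothesis (3) or handled by symmetrizing $\{h_j\}$ if the scalar field is real.
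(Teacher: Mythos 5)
Your proof is correct, but it follows a genuinely different route from the one the paper uses. The paper does not prove Theorem \ref{32} directly: it recalls it from \cite{ber} and later subsumes it under the abstract Theorem \ref{322}, whose proof is purely Banach-space-theoretic. There, condition (2) together with Beauzamy's criterion shows that the disjointly supported sequence $(f_n)$ is a \emph{basic sequence}; one then writes any $0\neq x$ in its closed span via the unique basis expansion, peels off the first nonzero coefficient, and applies (3)--(4). The PCS property enters only indirectly, to check that the disjoint-support relation survives norm limits. You instead pack the $f_n$ into blocks $h_j$ (harmless, though not actually needed --- the closed span of the $f_n$ themselves would serve) and recover the coefficients of an arbitrary $v$ in the closed span by hand, restricting an a.e.-convergent subsequence to each $E_{h_{j_0}}$. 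This is more measure-theoretic and arguably more elementary: it bypasses basic sequences entirely, and in fact it never genuinely uses condition (2) --- you invoke it only to get $h_j\neq 0$, which already follows from $h_j=f_{n_0}\neq 0$ on $E_{f_{n_0}}$ --- so your argument shows (2) is dispensable given (1) and the disjointness hypotheses, whereas the paper's proof leans on (2) as its engine. The cone subtlety you flag (needing $\lambda f\notin B$ for every nonzero scalar $\lambda$ when $f\notin B$) is real, but the paper's proof of Theorem \ref{322} makes exactly the same implicit assumption when it divides by $\alpha_N$, so you are reading the hypothesis as the authors do; note also that hypothesis (4) applied with $g=0$ forces $0\in B$, so every $f_n$ is automatically nonzero and your normalization $f_n/(2^n\|f_n\|)$ is legitimate.
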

For sequel, we need the next result which is a  generalization of \cite[Theorem 14.22]{niel}. The main idea for the proof comes from \cite[Theorem 14.22]{niel} but details are different.  The item ($a$) in this theorem is a more general version of the relation ($\alpha$) in \cite{ber}.
Denote 
$${\mathcal A}_0:=\{E \in {\mathcal A}:\, 0<\mu(E) \,\,\text{and}\,\,\chi_E\in X\}.$$


\begin{lemma}\label{Nielsen}
	Let $X$ be a solid Banach function space. Then, the followings are equivalent: 
		\begin{itemize}
	\item [($a$)] $\inf\{\|\chi_E\|_X: E\in \mathcal{A}_0\}=0$.
	\item [($b$)] There exists a sequence $\{A_n\}_{n=1}^\infty$ in $\mathcal{A}_0$ such that $A_n\cap A_m=\varnothing$ for all distinct $m,n\in\mathbb{N}$ and 
	 $$0<\|\chi_{A_n}\|_X\leq\frac{1}{2^n}, \quad (n\in \mathbb{N}).$$
	 \end{itemize}
\end{lemma}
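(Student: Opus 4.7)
The direction (b) $\Rightarrow$ (a) will be immediate from $\|\chi_{A_n}\|_X\le 2^{-n}\to 0$ together with $A_n\in\mathcal{A}_0$. For (a) $\Rightarrow$ (b), my plan is to extract a rapidly summable witness sequence from (a) and then produce disjoint pieces by a standard ``stripping tails'' procedure. Concretely, I will use (a) to pick $E_n\in\mathcal{A}_0$ with $0<\|\chi_{E_n}\|_X<2^{-n-1}$, and set
\[
U_n:=\bigcup_{k>n}E_k,\qquad A_n:=E_n\setminus U_n.
\]
Disjointness of the $A_n$ will be automatic: for $m<n$, $A_n\subseteq E_n\subseteq U_m$ while $A_m\cap U_m=\varnothing$. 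Solidity will give $\|\chi_{A_n}\|_X\le\|\chi_{E_n}\|_X<2^{-n-1}$ for free, so the norm bound required in (b) is built into the construction.

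The only nontrivial point is to show that infinitely many of the $A_n$ have positive $\mu$-measure. I will handle this with a Borel--Cantelli-style argument carried out inside $X$: since $\sum_{k>n}\|\chi_{E_k}\|_X<2^{-n-1}$, completeness of $X$ will make the series $\sum_{k>n}\chi_{E_k}$ converge in $X$ to an element dominating $\chi_{U_n}$ (monotone partial sums combined with Remark \ref{pcs} identify the norm limit with the pointwise limit a.e.). By solidity this will give $\chi_{U_n}\in X$ and $\|\chi_{U_n}\|_X<2^{-n-1}$. Using $\chi_{\limsup_n E_n}\le\chi_{U_n}$ for every $n$, I will let $n\to\infty$ to obtain $\|\chi_{\limsup_n E_n}\|_X=0$, which forces $\mu(\limsup_n E_n)=0$ because elements of a Banach function space are $\mu$-a.e.\ equivalence classes.

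Suppose, for contradiction, that $\mu(A_n)=0$ for every $n\ge N$; then $E_n\subseteq U_n$ modulo null sets for each such $n$, and iterating this inclusion would place a.e.\ every point of $E_N$ into infinitely many $E_k$, i.e.\ into $\limsup_k E_k$, contradicting $\mu(E_N)>0$. Hence there exist indices $n_1<n_2<\cdots$ with $\mu(A_{n_k})>0$, and since $n_k\ge k$ the relabeled sequence satisfies $0<\|\chi_{A_{n_k}}\|_X<2^{-n_k-1}\le 2^{-k}$, yielding (b). The main obstacle I anticipate is precisely this non-triviality of the $A_n$; the Borel--Cantelli-type $X$-norm estimate above, made possible by the completeness and solidity of $X$, is the device I will rely on to overcome it.
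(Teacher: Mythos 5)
Your proof is correct, but it takes a genuinely different route from the paper's in both directions. For $(b)\Rightarrow(a)$ you simply observe $\|\chi_{A_n}\|_X\le 2^{-n}\to 0$, which is all that is needed; the paper instead makes a detour, using the sequence $\{A_n\}$ to build a function in $X^p\setminus X^q$ and then citing an external inclusion theorem to recover $(a)$ --- your direct argument is simpler. For $(a)\Rightarrow(b)$, the paper follows Nielsen's exhaustion scheme: it introduces the set functions $\mathcal{K}(A)=\inf\{\|\chi_B\|_X: B\in\mathcal{A}_0,\ B\subseteq A\}$ and $\mathcal{K}'(A)$, proves a splitting lemma (if $\mathcal{K}(C)=0$ one can remove a small piece $A\subseteq C$ with $\mathcal{K}(C-A)=0$), and iterates starting from $\Omega$; this uses only solidity. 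You instead strip tails, $A_n:=E_n\setminus\bigcup_{k>n}E_k$, and rule out degeneracy by a Borel--Cantelli argument run inside $X$ (summing $\sum_{k>n}\chi_{E_k}$ in norm, identifying the limit a.e.\ via the PCS property, and concluding $\mu(\limsup_k E_k)=0$); this additionally invokes completeness of $X$ and Remark \ref{pcs}, both of which are available under the hypotheses and are in fact the same devices the paper uses elsewhere in this lemma and in the following theorem. Your contradiction step is sound provided you collect the countably many exceptional null sets from the a.e.\ inclusions $E_n\subseteq U_n$ before iterating, and your relabeling $n_k\ge k$ correctly delivers the bound $\|\chi_{A_{n_k}}\|_X\le 2^{-k}$. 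The trade-off: the paper's exhaustion argument is more economical in its hypotheses (solidity alone), while yours is arguably more transparent and avoids the somewhat delicate case analysis in the splitting step.
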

\begin{proof}
	$(b)\Rightarrow (a)$: 
	Let $\{A_n\}_{n=1}^\infty$ be a sequence in $\mathcal{A}$ which satisfies in ($b$). Let $1\leq p<q<\infty$. We claim that $X^p\nsubseteq X^q$. By the assumptions, we can write
	\begin{equation}\label{55}
		\sum_{n=1}^{\infty}\left\|n\,\|\chi_{A_n}\|_X^{\frac{-p}{q}}\cdot\chi_{A_n}\right\|_X=\sum_{n=1}^{\infty}n\|\chi_{A_n}\|_X^{1-\frac{p}{q}}\leq \sum_{n=1}^{\infty}n(2^{\frac{p}{q}-1})^n<\infty.		
	\end{equation}
Set 
$$f:=\sum_{n=1}^{\infty}n^{\frac{1}{p}}\|\chi_{A_n}\|_X^{\frac{-1}{q}}\cdot\chi_{A_n}\quad\text{and}\qquad S_N:=\sum_{n=1}^{N} n^{\frac{1}{p}}\|\chi_{A_n}\|_X^{\frac{-1}{q}}\cdot\chi_{A_n}$$  for all $N\in \mathbb{N}$. By the relation \eqref{55} the  sequence $\{S_N^p\}_{N=1}^{\infty}$ is Cauchy in $X$ and so it converges to some $g\in X$ with norm topology, because $X$ is complete. Now, by Remark \ref{pcs}, there exists a subsequence of  $\{S_N^p\}_{N=1}^{\infty}$ that converges to $g$ a.e. Therefore,
$$g=\sum_{n=1}^{\infty}n\|\chi_{A_n}\|_X^{\frac{-p}{q}}\cdot\chi_{A_n}=\left(\sum_{n=1}^{\infty} n^{\frac{1}{p}}\|\chi_{A_n}\|_X^{\frac{-1}{q}}\cdot\chi_{A_n}\right)^p=f^p \quad {\rm a.e.}$$ 
This implies that $|f|^p\in X$, and so $f\in X^p$. On the other hand, in contrast, let $f\in X^q$. Then, since $X$ is solid we have
\begin{align*}
	\|f\|_q=&\||f|^q\|_X\\	
	=&\left\|\left(\sum_{n=1}^{\infty}n^{\frac{1}{p}}\,\|\chi_{A_n}\|_X^{\frac{-1}{q}}\cdot\chi_{A_n}\right)^q\right\|_X\\
	\geq &\left\|k^{\frac{q}{p}}\left\|\chi_{A_k}\right\|_X^{-1}\cdot\chi_{A_k}\right\|_X=k^{\frac{q}{p}}
\end{align*}
for all $k\in\mathbb{N}$, and this implies that $\|f\|_q=\infty$, a contradiction. Hence, $f\in X^p-X^q$. Now, thanks to  \cite[Theorem 2.1]{tab5} we have $\inf\{\|\chi_E\|_X: E\in \mathcal{A}_0\}=0$.

 $(a)\Rightarrow (b)$: Let $\inf\{\|\chi_E\|_X: E\in \mathcal{A}_0\}=0$.  For each $A\in \mathcal{A}$ put
$$\mathcal{K}(A):=\inf\{\|\chi_B\|_X: \,B\in \mathcal{A}_0,\, B\subseteq A\}.$$
Clearly, 
\begin{enumerate}
\item if $A_1, A_2\in \mathcal{A}$ and $A_1\subseteq A_2$, then $\mathcal{K}(A_2)\leq \mathcal{K}(A_1)$, and
\item for each $C,B\in\mathcal{A}$ with $B\subseteq C$, if $\mathcal{K}(B),\mathcal{K}(C-B)>0$, then $\mathcal{K}(C)>0$.
\end{enumerate}
Note that (2) holds since for each $E\in\mathcal{A}_0$, if $E\subseteq C$, then $\|\chi_E\|_X\geq \min\{\mathcal{K}(B),\mathcal{K}(C-B)\}$.

For each $A\in\mathcal{A}$ we put
$$\mathcal{K}'(A):=\sup\{\|\chi_B\|_X:\,B\in \mathcal{A}_0,\, B\subseteq A\}.$$
Similar to the proof of \cite[Theorem 14.22]{niel} with different details, one can prove that
\begin{itemize}
	\item if $C\in \mathcal{A}$ and  $\mathcal{K}(C)=0$, then for each $\epsilon>0$ there exists $A\in \mathcal{A}_0$ such that $A\subseteq C$, $0<\|\chi_A\|_X<\min\{\epsilon,\mathcal{K}'(C)\}$ and $\mathcal{K}(C-A)=0$.
    \end{itemize}
Indeed, let $\mathcal{K}(C)=0$. Then, there exists a set $B\subseteq C$ such that $0<\|\chi_B\|_X<\min\{\varepsilon,\mathcal{K}'(C)\}$. If $\mathcal{K}(C-B)=0$ we set $A:=B$. If $\mathcal{K}(C-B)>0$, by (2) we have $\mathcal{K}(B)=0$, and so there is a set $D\in\mathcal{A}_0$ such that $D\subseteq B$ and $0<\|\chi_D\|_X<\|\chi_B\|_X$. In this situation, because of (2) we have $\mathcal{K}(D)=0$ or $\mathcal{K}(B-D)=0$, and then by (1) it would be enough to set $A:=B-D$ or $A:=D$, respectively.

Now, since $\inf\{\|\chi_E\|_X: E\in \mathcal{A}_0\}=0$, we have $\mathcal{K}(\Omega)=0$. So, there exists $A_1\in \mathcal{A}_0$ such that 
$0<\|\chi_{A_1}\|_X<\min\{\frac{1}{2}, \mathcal{K}'(\Omega)\}$ and $\mathcal{K}(\Omega-A_1)=0$. Setting $C:=\Omega-A_1$ in the above fact, there exists $A_2\in \mathcal{A}_0$ such that $A_2\subsetneq \Omega-A_1$, 
$$0<\|\chi_{A_2}\|_X<\min\{\frac{1}{2^2}, \mathcal{K}'(\Omega-A_1)\},$$ 
and $\mathcal{K}(\Omega-(A_1\cup A_2))=\mathcal{K}((\Omega-A_1)-A_2)=0$. By continuing this method, the desired sequence in $(b)$ is obtained.
\end{proof}
Now, we can give one of the main results of this section.
\begin{theorem}
	Let $X$ be a solid Banach function space and $\inf\{\|\chi_E\|_X: E\in \mathcal{A}_0\}=0$.
	Also, assume that 		$\sup\{\|\chi_E\|_X:\, E \in {\mathcal A}_\infty\}<\infty$, where ${\mathcal A}_\infty:=\{E \in {\mathcal A}:\, \chi_E\in X\}$.
	Then, for each $p\geq 1$, $X^p_{{\rm r-strict}}:=X^p-\bigcup_{p<q}X^q$ is spaceable in $X^p$.
\end{theorem}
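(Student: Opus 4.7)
The plan is to apply Theorem \ref{32} with $\mathcal{E} := X^p$ and $B := \bigcup_{p<q} X^q$ (intersected with $X^p$ if needed so that it sits inside $\mathcal{E}$), and to verify its five hypotheses. Conditions (1)--(4) are fairly routine. Since $X$ is solid and $\|f\|_p^p = \bigl\||f|^p\bigr\|_X$, the space $X^p$ is itself solid, hence a PCS-space by Remark \ref{pcs}, giving (1). For (2), whenever $E_f \cap E_g = \varnothing$ we have $|f+g|^p = |f|^p + |g|^p$ pointwise, so solidity of $X$ yields $\|f+g\|_p \ge \|f\|_p$; thus $k=1$ works. Condition (3) holds because $B$ is a union of linear subspaces through $0$. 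For (4), if $f, g \in X^p$ have disjoint supports and $f+g \in X^q$ for some $q > p$, then $|f|^q \le |f+g|^q \in X$ and $|g|^q \le |f+g|^q \in X$, so solidity forces $f, g \in X^q \subseteq B$.

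The main obstacle is condition (5): producing pairwise disjointly supported $\{f_k\}_{k=1}^{\infty} \subseteq X^p \setminus B$, where each $f_k$ must avoid \emph{every} $X^q$ with $q > p$ simultaneously. My plan is to invoke Lemma \ref{Nielsen} to obtain pairwise disjoint $A_n \in \mathcal{A}_0$ with $\alpha_n := \|\chi_{A_n}\|_X \le 2^{-n}$, partition $\mathbb{N}$ into infinite blocks $M_1, M_2, \ldots$, and set
\[
f_k := \sum_{n \in M_k} a_n \chi_{A_n}, \qquad a_n := (n^2 \alpha_n)^{-1/p}.
\]

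The choice of $a_n$ is the delicate calibration. On one hand, $a_n^p \alpha_n = 1/n^2$ is summable, so the partial sums of $|f_k|^p$ form a Cauchy sequence in $X$ by the triangle inequality; combined with the PCS property of $X$ and the pointwise monotonicity of the partial sums, this identifies the limit with $|f_k|^p$ and shows $f_k \in X^p$. On the other hand, for every $q > p$ the exponential decay $\alpha_n \le 2^{-n}$ yields
\[
a_n^q \alpha_n = n^{-2q/p}\, \alpha_n^{1 - q/p} \ge n^{-2q/p}\, 2^{n(q-p)/p} \longrightarrow \infty \quad (n \to \infty),
\]
so the pointwise inequality $|f_k|^q \ge a_n^q \chi_{A_n}$ together with solidity of $X$ forces $|f_k|^q \notin X$, i.e.\ $f_k \notin X^q$. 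Pairwise disjointness of the supports follows from the partition of $\mathbb{N}$. With (5) in hand, Theorem \ref{32} delivers the claimed spaceability of $X^p - \bigcup_{p<q} X^q$ in $X^p$.
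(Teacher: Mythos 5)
Your proposal is correct and follows essentially the same route as the paper: both verify the hypotheses of Theorem \ref{32} with $\mathcal{E}=X^p$ and $B=\bigcup_{q>p}X^q$, and both settle the crucial condition (5) by feeding the pairwise disjoint sets from Lemma \ref{Nielsen} into disjointly supported series $\sum a_n\chi_{A_n}$ whose coefficients are calibrated so that $a_n^p\|\chi_{A_n}\|_X$ is summable while $a_n^q\|\chi_{A_n}\|_X\to\infty$ for every $q>p$ via the exponential decay $\|\chi_{A_n}\|_X\le 2^{-n}$. The only cosmetic differences are your summable weight $1/n^2$ in place of the paper's $1/\bigl(k\log^2(1+k)\bigr)$ and your (equally valid) intersection of $B$ with $X^p$ instead of invoking the hypothesis $\sup\{\|\chi_E\|_X: E\in\mathcal{A}_\infty\}<\infty$ to get $X^q\subseteq X^p$.
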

\begin{proof}
	We shall show that the conditions of Theorem \ref{32} hold with $\mathcal E:=X^p$ and $B=\bigcup_{p<q}X^q$. Note that since $\Omega\in\mathcal{A}_0$, for each $q>p$ we have $X^q\subset X^p$ thanks to \cite[Theorem 2.4]{tab5}. Clearly, $B$ is a cone because each $X^q$ is a linear space, and $X^p$ is a PCS-space by Remark \ref{pcs}.
	Also, the condition (2) in Theorem \ref{32} holds since $X$ is solid.
	For the condition (4),  let $f,g\in X^p$ with $E_f\cap E_g=\varnothing$ and $f+g\in B$. Then, there exists $q>p$ such that $f+g\in X^q$. We have
	$$|f|^q,|g|^q\leq |f|^q+|g|^q=|f+g|^q\in X,$$
	and this implies that $f,g\in X^q\subset B$.
At the end, we show that the condition (5) in Theorem \ref{32} hold. The main idea for the proof of this part comes from \cite[Theorem 3.3]{ber}.  Since $\inf\{\|\chi_E\|_X: E\in \mathcal{A}_0\}=0$, by Lemma \ref{Nielsen} 
	 there exists a sequence $\{A_n\}_{n=1}^\infty$ in $\mathcal{A}$ with pairwise disjoint terms such that $0<\|\chi_{A_n}\|_X\leq\frac{1}{2^n}$ for all $n\in \mathbb{N}$.  
	 As in \cite[Theorem 3.3]{ber}, for each $n\in\mathbb{N}$, we choose a strictly increasing sequence $\{p_{n,k}\}_{k=1}^\infty$ of natural numbers such that $k\leq p_{n,k}$ for all $n,k\in\mathbb{N}$ and the elements of family $\{\{p_{n,k}\}_{k=1}^\infty: n\in \mathbb{N}\}$ are mutually disjoint. For each $n, k, m\in \mathbb{N}$ we put
	\begin{align*}
	\alpha_{n,k}:=\frac{1}{\big  (k(\log(1+k))^2\|\chi_{A_{p_{n,k}}}\|_X\big)^{\frac{1}{p}}} \quad\text{and}\quad S_{n,m}:=\sum_{k=1}^{m}\alpha_{n,k}\chi_{A_{p_{n,k}}}.
	\end{align*}
	Since $\sum_{k=1}^{\infty}\|\alpha^p_{n,k}\chi_{A_{p_{n,k}}}\|_X=\sum_{k=1}^{\infty}\frac{1}{k\log(1+k)^2}<\infty$,    the sequence $\{|S_{n,m}|^p\}_{m=1}^\infty$ is Cauchy and so convergence in $X$ for all $n\in \mathbb{N}$.  Now, we have 
	$$\lim_{m\rightarrow \infty}|S_{n,m}|^p=\sum_{k=1}^{\infty}\alpha^p_{n,k}\chi_{A_{p_{n,k}}}$$
	in $X$ because $X$ is a PCS-space (see Remark \ref{pcs}). In particular, we have $f_n^p=\sum_{k=1}^{\infty}\alpha^p_{n,k}\chi_{A_{p_{n,k}}}\in X$, where 
	$$f_n:=\sum_{k=1}^{\infty}\alpha_{n,k}\chi_{A_{p_{n,k}}},\qquad(n\in\mathbb{N}).$$
	
	In fact, we have $\{f_n\}_{n=1}^\infty\subseteq X^p$ with $E_{f_n}\cap E_{f_m}=\varnothing$ for all distinct $m,n\in\mathbb{N}$. 
	 On the other hand, for each $q>p$,  if $f_n\in X^q$, then 
	 \begin{align*}
	 	\|f_n\|_{X^q}=&\big(\||f_n|^q\|_X\big)^{\frac{1}{q}}\\
	 	=&\left\|\sum_{k=1}^{\infty}\alpha_{n,k}^q\,\chi_{A_{p_{n,k}}}\right\|_X^{\frac{1}{q}}\\
	 	\geq& \left\|\alpha_{n,k}^q\,\chi_{A_{p_{n,k}}}\right\|_X^{\frac{1}{q}}\\
	 	\geq&\left(\frac{2^{(\frac{q}{p}-1)\,p_{n,k}}}{k^{\frac{q}{p}}\log(1+k)^{\frac{2q}{p}}}\right)^{\frac{1}{q}}\\
	 	\geq&\left(2^{(\frac{q}{p}-1)k}\right)^{\frac{1}{q}}.
	 \end{align*}
	So, since $k\in\mathbb{N}$ is arbitrary, we have $\|f_n\|_{X^q}=\infty$, a contradiction. Therefore, $\{f_n\}_{n=1}^\infty \subseteq X^p_{{\rm r-strict}}$ and the proof is complete.
\end{proof}

Next, an extension of the main part of \cite[Theorem 3 page 155]{rao} is proved.

Motivated by definition of a diffuse set for a measure (see \cite[page 46]{rao}), we initiate the following concept. For each $E\subseteq \Omega$, denote $\mathcal{A}_E:=\{A\subseteq E: A\in \mathcal{A}\}$.
\begin{definition}\label{3.3}
	A set $E\in\mathcal{A}$ is called {\it diffuse} for a Banach function space $X$ if $\chi_E\in X$ and for each $Y\in \mathcal{A}_E$ and $0\leq \alpha \leq \|\chi_{Y}\|_X$ there exists some $F\in \mathcal{A}_Y$ such that $\|\chi_{F}\|_X=\alpha$.
\end{definition}
The main idea for proof of the next result comes from \cite[Theorem 3 page 155]{rao}, but the details are different because the situation is more general.
\begin{theorem}\label{thm444}
	Let $\Phi_1, \Phi_2$ be two strictly increasing continuous Young functions. If there exists a diffuse set $E\in \mathcal{A}_\infty$ for $X$ with $\mu(E)>0$, then the inclution $X^{\Phi_2} \subseteq X^{\Phi_1}$ implies that $\Phi_1\prec \Phi_2$.
\end{theorem}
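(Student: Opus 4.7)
The plan is to prove the contrapositive: assuming $\Phi_1 \not\prec \Phi_2$, I will construct a function $f \in X^{\Phi_2} \setminus X^{\Phi_1}$, thereby contradicting $X^{\Phi_2} \subseteq X^{\Phi_1}$. The negation of $\Phi_1 \prec \Phi_2$ unfolds to: for every $a, c > 0$ and every $x_0 \geq 0$ there exists $x \geq x_0$ with $\Phi_1(x) > a\,\Phi_2(cx)$. Setting $M := \|\chi_E\|_X > 0$, I use this diagonally to pick, for each $n \in \mathbb{N}$, a number $y_n > 0$ satisfying
\[
\Phi_1(y_n/n) > 2^{2n}\,\Phi_2(y_n) \qquad \text{and} \qquad \Phi_2(y_n) \geq 1/M,
\]
using $a = 2^{2n}$, $c = 1/n$, and $x_0$ large enough (the second inequality is secured by enlarging $x_0$, since $\Phi_2$ is unbounded).

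Next, I use the diffuseness of $E$ to extract pairwise disjoint measurable sets $E_n \subseteq E$ with $\|\chi_{E_n}\|_X = 2^{-n}/\Phi_2(y_n)$. Set $Y_0 := E$ and, having chosen $E_1,\dots,E_{n-1}$, let $Y_{n-1} := E \setminus \bigcup_{k<n} E_k \in \mathcal{A}_E$. By the triangle inequality,
\[
\|\chi_{Y_{n-1}}\|_X \;\geq\; M - \sum_{k<n} \|\chi_{E_k}\|_X \;\geq\; M - M\sum_{k<n} 2^{-k} \;=\; M\cdot 2^{-(n-1)},
\]
which dominates the target $\|\chi_{E_n}\|_X = 2^{-n}/\Phi_2(y_n) \leq 2^{-n} M$. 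Hence Definition \ref{3.3}, applied to $E$ with $Y = Y_{n-1}$ and $\alpha = 2^{-n}/\Phi_2(y_n)$, supplies $E_n \subseteq Y_{n-1}$ with the prescribed norm. Define
\[
f := \sum_{n=1}^{\infty} y_n\,\chi_{E_n}.
\]
By disjointness of the $E_n$ and $\Phi_2(0) = 0$, pointwise $\Phi_2(|f|) = \sum_n \Phi_2(y_n)\chi_{E_n}$; the partial sums of this nonnegative series have $X$-norm bounded by $\sum_n \Phi_2(y_n)\|\chi_{E_n}\|_X = \sum_n 2^{-n} = 1$, so they form a Cauchy sequence converging in $X$ to some $g$ with $\|g\|_X \leq 1$. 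Since $X$ is a PCS-space (Remark \ref{pcs}), a subsequence converges a.e. to $g$, forcing $g = \Phi_2(|f|)$ a.e. Thus $\|f\|_{\Phi_2} \leq 1$, so $f \in X^{\Phi_2}$.

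Finally, I show $f \notin X^{\Phi_1}$. Fix $\lambda > 0$ and any $n \geq \lambda$. If $\Phi_1(|f|/\lambda) \in X$, then by solidity and monotonicity of $\Phi_1$,
\[
\|\Phi_1(|f|/\lambda)\|_X \;\geq\; \Phi_1(y_n/\lambda)\,\|\chi_{E_n}\|_X \;\geq\; \Phi_1(y_n/n)\cdot\frac{2^{-n}}{\Phi_2(y_n)} \;>\; \frac{2^{2n}\Phi_2(y_n)\cdot 2^{-n}}{\Phi_2(y_n)} \;=\; 2^n,
\]
which is unbounded as $n \to \infty$. Hence for every $\lambda > 0$, either $\Phi_1(|f|/\lambda) \notin X$ or its norm exceeds $1$; the defining set in the infimum for $\|f\|_{\Phi_1}$ is therefore empty, so $\|f\|_{\Phi_1} = \infty$ and $f \notin X^{\Phi_1}$.

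The main obstacle I expect is the bookkeeping in the inductive selection of the $E_n$: the prescribed norms $2^{-n}/\Phi_2(y_n)$ must simultaneously be small enough that $\sum_n \|\chi_{E_n}\|_X \leq M$ (so the diffuseness of $E$ never runs out of room), and large enough that, when combined with the growth rate of $\Phi_1(y_n/n)$, they produce an unbounded lower bound on $\|\Phi_1(|f|/\lambda)\|_X$. The strict quantitative choice $\Phi_1(y_n/n) > 2^{2n}\Phi_2(y_n)$ (rather than $> n\,\Phi_2(y_n)$) is precisely what leaves the $2^n$ margin after the $2^{-n}$ factor is absorbed into $\|\chi_{E_n}\|_X$.
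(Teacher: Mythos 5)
Your proposal is correct and follows essentially the same route as the paper's proof: negate $\Phi_1\prec\Phi_2$ to extract a sequence along which $\Phi_1$ dominates a scaled $\Phi_2$ geometrically, use diffuseness to carve out disjoint sets with prescribed norms summing like $\sum 2^{-n}$, and build a step function that lies in $X^{\Phi_2}$ (Cauchy partial sums plus the PCS property) but whose $\Phi_1$-modular is unbounded for every scaling $\lambda$. Your version is in fact slightly more careful than the paper's in two spots — the explicit triangle-inequality induction guaranteeing $\alpha\le\|\chi_{Y_{n-1}}\|_X$ at each step, and the check that $\|\Phi_2(|f|)\|_X\le 1$ so that $\|f\|_{\Phi_2}\le 1$ — with only a cosmetic slip in the placement of the constant (you want $c=n$ and $y_n:=nx$, not $c=1/n$, to land on $\Phi_1(y_n/n)>2^{2n}\Phi_2(y_n)$).
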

\begin{proof}
	Let the assumptions hold and $X^{\Phi_2} \subseteq X^{\Phi_1}$.	In contrast, assume that $\Phi_1\nprec \Phi_2$. Then,  there exists an increasing sequence $\{a_n\}_{n=1}^\infty$ in $(0,\infty)$ such that $\lim_{n\rightarrow \infty}a_n=\infty$ and 
	\begin{align}\label{sss}
	\Phi_1(a_n)>n\,2^n\Phi_2(n^2a_n), \qquad (n\in \mathbb{N}).
	\end{align}	
	Since $\sum_{n=1}^{\infty}\frac{\Phi_2(a_1)\|\chi_E\|_X}{2^n\Phi_2(n^2a_n)}<\|\chi_E\|_X$, there exists  $E_0\in \mathcal{A}_E$ such that
	\begin{align*}
	\|\chi_{E_0}\|_X=\sum_{n=1}^{\infty}\frac{\Phi_2(a_1)\|\chi_E\|_X}{2^n\Phi_2(n^2a_n)},
	\end{align*}
	because $E$ is a diffuse set for $X$. Inductively, one can find a pairwise disjoint sequence $\{E_n\}_{n=1}^\infty$ in $\mathcal{A}_{E_0}$ such that
	
	\begin{equation}\label{ttt}
	\|\chi_{E_n}\|_X=\frac{\Phi_2(a_1)\|\chi_E\|_X}{2^n\Phi_2(n^2a_n)}, \qquad (n\in \mathbb{N}).
	\end{equation}
	
	
	So, setting $f:=\sum_{n=1}^{\infty}na_n\chi_{E_n}$ we have
	\begin{align*}
	\sum_{n=1}^{\infty}\Phi_2(n^2a_n)\|\chi_{E_n}\|_X&=\sum_{n=1}^{\infty}\frac{\Phi_2(a_1)\|\chi_{E}\|_X}{2^n}\\
	&=\Phi_2(a_1)\|\chi_{E}\|_X< \infty.
	\end{align*}
	This implies that the sequence $$\left(\sum_{n=1}^{k}\Phi_2(n^2a_n)\chi_{E_n}\right)_k$$ is Cauchy and so convergent in $X$.  But, by Remark \ref{pcs}, the convergence point is $\Phi_2(f)=\sum_{n=1}^{\infty}\Phi_2(n^2a_n)\chi_{E_n}$. So,  $f\in X^{\Phi_2}$.
	
	On the other hand, let $\alpha>0$ be arbitrary. In contrast, let $\Phi_1(\alpha f)\in X$. Fix a number $m\in\mathbb{N}$ such that $\frac{1}{n}<\alpha$ for all $n\geq m$. Then, thanks to the relations \eqref{sss} and \eqref{ttt} we have
	\begin{align*}
	\|\Phi_1(\alpha f)\|_X=&\left\|\sum_{n=1}^{\infty}\Phi_1(\alpha na_{n})\chi_{E_{n}}\right\|_X\\
	\geq&\Phi_1(\alpha k a_{k})\|\chi_{E_{k}}\|_X\\
	\geq&\Phi_1(a_{k})\|\chi_{E_{k}}\|_X\\
	\geq&k\,\Phi_2(a_1)\|\chi_{E}\|_X.
	\end{align*}
	for all $k\geq m$, and so $\|\Phi_1(\alpha f)\|_X=\infty$, a contradiction. This shows that $f\notin X^{\Phi_1}$, and the proof is complete.	
\end{proof}
\begin{corollary}
Under the assumptions of Theorem \ref{thm444}, if $\Phi_1\nprec \Phi_2$, then $X^{\Phi_2}-X^{\Phi_1}$ is spaceable in $X^{\Phi_2}$.
\end{corollary}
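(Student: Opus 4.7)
The plan is to apply Theorem \ref{32} with $\mathcal{E}:=X^{\Phi_2}$ and $B:=X^{\Phi_1}\cap X^{\Phi_2}$, so that $\mathcal{E}-B=X^{\Phi_2}-X^{\Phi_1}$. Conditions (1)--(4) of Theorem \ref{32} are routine: solidity of $X$ propagates through the Orlicz construction, so $X^{\Phi_1}$ and $X^{\Phi_2}$ are solid Banach function spaces, hence PCS by Remark \ref{pcs}, giving (1); disjoint supports yield $|f|\le |f+g|$ a.e., so solidity gives (2) with $k=1$; $B$ is a linear subspace, hence a cone, giving (3); and if $f+g\in X^{\Phi_1}$ with $E_f\cap E_g=\varnothing$ then $|f|,|g|\le |f+g|\in X^{\Phi_1}$, so solidity of $X^{\Phi_1}$ delivers (4).

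The substantive part is condition (5): producing a sequence $\{f_n\}_{n=1}^\infty\subseteq X^{\Phi_2}-X^{\Phi_1}$ with pairwise disjoint supports. My plan is to split the diffuse set $E$ into countably many pairwise disjoint subsets $E^{(n)}$, each itself diffuse for $X$, and to rerun the construction from the proof of Theorem \ref{thm444} inside each $E^{(n)}$ independently. To build the partition, set $R_0:=E$ and inductively invoke the diffuse property with $Y=R_{n-1}$ and $\alpha=\|\chi_{R_{n-1}}\|_X/2$ to obtain $E^{(n)}\subseteq R_{n-1}$ of that norm, then let $R_n:=R_{n-1}-E^{(n)}$. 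Since the norm on a Banach function space separates $\mu$-a.e.\ classes, the strict inequality $\|\chi_{E^{(n)}}\|_X<\|\chi_{R_{n-1}}\|_X$ forces $\mu(R_n)>0$ and thus $\|\chi_{R_n}\|_X>0$, so the recursion continues indefinitely.

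For each $n$ I then replay the proof of Theorem \ref{thm444} verbatim with $E^{(n)}$ in place of $E$: the increasing sequence $\{a_k\}$ witnessing $\Phi_1\nprec\Phi_2$ and the diffuseness of $E^{(n)}$ produce pairwise disjoint subsets $E^{(n)}_k\subseteq E^{(n)}$ with the prescribed norms, and then $f_n:=\sum_{k\ge 1} ka_k\chi_{E^{(n)}_k}$ lies in $X^{\Phi_2}-X^{\Phi_1}$ and is supported in $E^{(n)}$. Pairwise disjointness of the $E^{(n)}$ gives $E_{f_n}\cap E_{f_m}=\varnothing$ for $n\ne m$, so (5) holds and Theorem \ref{32} concludes. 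The main obstacle -- really the only genuine point to verify -- is that each piece $E^{(n)}$ is itself diffuse for $X$, so that Theorem \ref{thm444}'s construction can be invoked inside it. This follows at once from $\mathcal{A}_{E^{(n)}}\subseteq \mathcal{A}_E$: any target norm in a subset of $E^{(n)}$ is realizable by applying the diffuseness of $E$ directly, and $\chi_{E^{(n)}}\in X$ since $X$ is solid. Once this is noted, no further work is required beyond bookkeeping.
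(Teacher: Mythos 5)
Your proof is correct and follows the paper's overall strategy --- apply Theorem \ref{32} with $B=X^{\Phi_1}\cap X^{\Phi_2}$ and manufacture disjointly supported functions in $X^{\Phi_2}-X^{\Phi_1}$ by repeating the construction of Theorem \ref{thm444} --- but the way you arrange disjointness is genuinely different. The paper keeps the single diffuse set $E$ and the single pairwise disjoint family $\{E_k\}$ from the proof of Theorem \ref{thm444}, and instead partitions the index set $\mathbb{N}$ into infinitely many infinite pieces $N_n$, in effect setting $f_n:=\sum_{k\in N_n}ka_k\chi_{E_k}$; disjointness of supports is then automatic, and $f_n\notin X^{\Phi_1}$ because the lower bound $\Phi_1(a_k)\|\chi_{E_k}\|_X\geq k\,\Phi_2(a_1)\|\chi_E\|_X$ still blows up along the infinite set $N_n$. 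You instead partition the diffuse set $E$ itself into pieces $E^{(n)}$ of positive norm and rerun the whole construction inside each piece. Your route rests on the (correct, and as you note trivial) observation that a measurable subset of a diffuse set whose indicator lies in $X$ is again diffuse, since Definition \ref{3.3} quantifies over all $Y\in\mathcal{A}_E$; your halving recursion producing the $E^{(n)}$ is also sound, since $\|\chi_{E^{(n)}}\|_X<\|\chi_{R_{n-1}}\|_X$ forces $\mu(R_n)>0$ by solidity. The paper's index-splitting is marginally lighter on bookkeeping (no new diffuse sets to produce), while yours is more modular: each $f_n$ is literally an instance of Theorem \ref{thm444} applied to $E^{(n)}$, so nothing inside that proof needs to be re-examined. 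Both are valid, and the verification of conditions (1)--(4) of Theorem \ref{32}, which you carry out via solidity of $X^{\Phi_i}$ and Remark \ref{pcs}, is the same in either case.
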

\begin{proof}
	Let for each $n\in\mathbb{N}$, $N_n$ be a strictly increasing sequence of natural numbers and $\{N_n\}_{n=1}^\infty$ be a partition of $\mathbb{N}$. Then, similar to the proof of Theorem \ref{thm444} it would be routine to construct a sequence $(f_n)$ in  $X^{\Phi_2}-X^{\Phi_1}$ such that for each distinct $m,n\in\mathbb{N}$, $E_{f_n}\cap E_{f_m}=\varnothing$. Now, easily by Theorem \ref{32} the statement is proved.
\end{proof}
\section{Some New Sufficient Conditions With Applications}
In this section, first we give an abstract version of Theorem \ref{32} and then present some applications regarding Cartesian product of $X^p$ spaces and also the space of bounded linear operators on a Hilbert space. For this we need to initiate the next concept.
\begin{definition}\label{dd}
	Let $\mathcal{E}$ be a topological vector space. We say that a relation $\sim$ on $\mathcal{E}$ has property $(D)$ if the following conditions hold:
	\begin{enumerate}
		\item If $(x_n)$ is a sequence in $\mathcal{E}$ such that $x_n\sim x_m$ for all distinct index $m,n$, then for each disjoint finite subsets $A,B$ of $\mathbb{N}$ we have 
		$$\sum_{n\in A}\alpha_nx_n\sim \sum_{m\in B}\beta_m x_m,$$
		where $\alpha_n$ and $\beta_m$'s are arbitrary scalars.
		\item If a sequence $(x_n)$ converges to $x$ in $\mathcal{E}$ and for some $y\in \mathcal{E}$, $x_n\sim y$ for all $n\in\mathbb{N}$, then $x\sim y$.
	\end{enumerate}
\end{definition}
We recall that a sequence $(x_n)$ in a Banach space $\mathcal{E}$ is called a \emph{basic sequence} if for each $x$ in $\overline{{\rm span}}\{x_1,x_2,\ldots\}$, the closed linear span of $\{x_1,x_2,\ldots\}$, there are unique scalars $\alpha_1,\alpha_2,\ldots$ such that $x=\lim_n \sum_{k=1}^n\alpha_kx_k$ in $\mathcal{E}$. Note that, by \cite[Proposition 1, Chapter II]{bea}, $(x_n)$ is a basic sequence if and only if there is a constant $k>0$ such that for each $m,n$ with $m\geq n$ and each scalars $\alpha_1,\ldots,\alpha_m$, $\left\|\sum_{j=1}^n\alpha_jx_j\right\|\leq k\,\left\|\sum_{j=1}^m\alpha_jx_j\right\|$. In this paper (as in \cite{ber}) a subset $B$ of a vector space is called a \emph{cone} if for each scalar $c$, $cB\subseteq B$.
\begin{theorem}\label{322}
Let $(\mathcal{E},\|\cdot\|)$ be a Banach space, $\sim$ be a relation on $\mathcal{E}$ with property $(D)$, and $K$ be a nonempty
subset of $\mathcal{E}$. Assume that:
\begin{enumerate}	
	\item there is a constant $k>0$ such that $\|x+y\|\geq k\,\|x\|$ for all $x,y\in \mathcal{E}$ with $x\sim y$;
	\item  $K$ is a cone;
	\item if $x,y\in \mathcal{E}$ such that $x+y\in K$ and  $x\sim y$ then $x,y\in K$;
	\item there is an infinite sequence $\{x_n\}_{n=1}^\infty\subseteq \mathcal{E}-K$ such that for each distinct $m,n\in\mathbb{N}$, $x_m\sim x_n$.
\end{enumerate}
Then, $\mathcal{E}-K$ is spaceable in $\mathcal{E}$.
\end{theorem}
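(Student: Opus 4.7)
The plan is to mimic the proof of Theorem \ref{32}, with the abstract relation $\sim$ playing the role of the disjoint-support relation $E_f \cap E_g = \varnothing$: property $(D)$(1) supplies the ``disjoint finite combinations are related'' principle, and property $(D)$(2) supplies the closure under norm limits needed to pass from partial sums to their limits. The goal is to produce a closed infinite-dimensional subspace $Y$ of $\mathcal{E}$ with $Y \setminus \{0\} \subseteq \mathcal{E} - K$, and the natural candidate is the closed linear span of the sequence $\{x_n\}$ from hypothesis (4).

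First I would verify that $(x_n)_{n=1}^\infty$ is a basic sequence in $\mathcal{E}$. For any scalars $\alpha_1,\ldots,\alpha_m$ and any $n \leq m$, property $(D)$(1) applied with $A := \{1,\ldots,n\}$ and $B := \{n+1,\ldots,m\}$ yields $\sum_{j=1}^{n}\alpha_j x_j \sim \sum_{j=n+1}^{m}\alpha_j x_j$, so hypothesis (1) of the theorem gives $\bigl\|\sum_{j=1}^{m}\alpha_j x_j\bigr\| \geq k\,\bigl\|\sum_{j=1}^{n}\alpha_j x_j\bigr\|$. This is precisely the basic-sequence criterion recalled just before the theorem, so $Y := \overline{\mathrm{span}}\{x_n : n \in \mathbb{N}\}$ is a closed infinite-dimensional subspace of $\mathcal{E}$.

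Next I would show that $Y \setminus \{0\} \subseteq \mathcal{E} - K$. Fix $0 \neq x \in Y$ and, using the basic-sequence property, write $x = \sum_n \alpha_n x_n$ with uniquely determined scalars; let $n_0$ be the smallest index with $\alpha_{n_0} \neq 0$. For every $N > n_0$, property $(D)$(1) with $A := \{n_0\}$ and $B := \{n_0+1,\ldots,N\}$ gives $\alpha_{n_0} x_{n_0} \sim \sum_{j=n_0+1}^{N}\alpha_j x_j$. Because the right-hand sides converge in norm to $x - \alpha_{n_0} x_{n_0}$, property $(D)$(2) then yields $\alpha_{n_0} x_{n_0} \sim x - \alpha_{n_0} x_{n_0}$. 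If $x$ belonged to $K$, hypothesis (3) would force $\alpha_{n_0} x_{n_0} \in K$, and the cone property (2) would then give $x_{n_0} \in K$, contradicting $x_{n_0} \in \mathcal{E} - K$. Hence $(\mathcal{E} - K) \cup \{0\} \supseteq Y$, which is the required spaceability.

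The step I expect to be most delicate is the very last deduction $\alpha_{n_0} x_{n_0} \in K \Rightarrow x_{n_0} \in K$: this needs $K$ to be closed under multiplication by the nonzero scalar $\alpha_{n_0}^{-1}$ rather than only under positive scalings. This is automatic in the intended applications (where $K$ is a union of linear subspaces, e.g.\ $\bigcup_{p<q} X^q$), but for the cleanest abstract statement one should either read ``cone'' as invariance under all nonzero scalar multiplication or else pass to a basic subsequence where this reduction is transparent.
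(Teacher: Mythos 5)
Your proof is correct and follows essentially the same route as the paper's: establish that $(x_n)$ is a basic sequence via property $(D)(1)$ and hypothesis (1) together with the Beauzamy criterion, then split a nonzero $x=\alpha_{n_0}x_{n_0}+y$ and use $(D)(2)$ and hypotheses (3), (2) to derive a contradiction from $x\in K$. The delicate point you flag at the end (passing from $\alpha_{n_0}x_{n_0}\in K$ to $x_{n_0}\in K$ via the cone property) is present, and equally unaddressed, in the paper's own proof, so your remark is a fair observation about the abstract statement rather than a defect of your argument relative to theirs.
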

\begin{proof}
	The main idea of the proof comes from \cite[Theorem 2.2]{ber}. Indeed, applying condition $(D)$ in Definition \ref{dd} and thanks to \cite[Proposition 1, Chapter II]{bea} one can see that the sequence $(x_n)$ in assumption (4) is a basic sequence, and this shows that $(x_n)$ is linearly independent. Let $0\neq x\in \overline{{\rm span}}\{x_1,x_2,\ldots\}$. Then, since by definition of basic sequences, there exist unique scalars $\alpha_1,\alpha_2,\ldots$ such that $x=\sum_{n=1}^\infty \alpha_n x_n$. Put $N:=\min\{n\in\mathbb{N}:\,\alpha_n\neq 0\}$.
So, $x=\alpha_N x_N+y$, where $y:=\lim_{m\rightarrow\infty} \sum_{n=N+1}^m\alpha_n x_n$.
Again, applying both conditions in Definition \ref{dd} we have $x_N\sim y$. In contrast, if $x\in K$, then by the assumptions (3) and (2) we have $x_N\in K$, a contradiction. Therefore, $(\mathcal{E}-K)\cup \{0\}$ contains the closed infinite-dimensional space 
$\overline{{\rm span}}\{x_1,x_2,\ldots\}$, and this completes the proof.
\end{proof}
\begin{remark}
	We mention that this theorem is a generalization of \cite[Theorem 2.2]{ber} (Theorem \ref{32}). Just note that for each Banach function space $X$, the relation $\sim$ defined by 
	$$f\sim g \text{ if and only if } E_f\cap E_g=\varnothing$$
	for all $f,g\in X$, has the property $(D)$.
\end{remark}
Applying Theorem \ref{322}, we give the next result which could not be concluded from \cite[Theorem 2.2]{ber}.
\begin{theorem}
	Let $X$ be a solid Banach function space on $\Omega$ and assume that $\inf\{\|\chi_E\|_X: E\in \mathcal{A}_0\}=0$. 
	 Then, for each $1\leq p,q< r$, the set $\{(f,g)\in X^p\times X^q:\,fg\notin X^r\}$ is spaceable in $X^p\times X^q$. 
\end{theorem}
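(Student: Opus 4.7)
I would apply Theorem~\ref{322} to $\mathcal{E}:=X^p\times X^q$ (normed by $\|(f,g)\|:=\|f\|_{X^p}+\|g\|_{X^q}$), the cone $K:=\{(f,g)\in\mathcal{E}:\,fg\in X^r\}$, and the relation $(f_1,g_1)\sim(f_2,g_2)$ iff $(E_{f_1}\cup E_{g_1})\cap(E_{f_2}\cup E_{g_2})=\varnothing$. The identity $(\lambda f)(\lambda g)=\lambda^2 fg$ shows that $K$ is a cone, and $\mathcal{E}-K$ is exactly the set whose spaceability is claimed.

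Since $X$ is solid, the formula $\|h\|_{X^s}=\bigl\||h|^s\bigr\|_X^{1/s}$ makes $X^p,X^q,X^r$ solid Banach function spaces, hence PCS by Remark~\ref{pcs}. Property~$(D)$ of $\sim$ is then routine: clause~(1) of Definition~\ref{dd} holds because the support of a finite linear combination lies in the union of the individual supports, and clause~(2) follows from the PCS property via a diagonal a.e.-subsequence argument. Condition~(1) of Theorem~\ref{322} holds with constant~$1$, since $E_{f_1}\cap E_{f_2}=\varnothing$ gives $|f_1|\le|f_1+f_2|$ pointwise and solidity does the rest componentwise. For condition~(3), the relation $\sim$ forces all four intersections $E_{f_i}\cap E_{g_j}$ (together with $E_{f_1}\cap E_{f_2}$ and $E_{g_1}\cap E_{g_2}$) to be empty, so the cross terms in $(f_1+f_2)(g_1+g_2)=f_1g_1+f_2g_2$ vanish; since $E_{f_1g_1}\cap E_{f_2g_2}=\varnothing$, solidity of $X^r$ transfers membership from the sum to each summand.

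The crux—condition~(4)—is the construction of a $\sim$-disjoint sequence in $\mathcal{E}-K$. Using Lemma~\ref{Nielsen}, fix pairwise disjoint $\{A_k\}_{k\ge 1}\subseteq\mathcal{A}_0$ with $c_k:=\|\chi_{A_k}\|_X\in(0,2^{-k}]$, and partition $\mathbb{N}=\bigsqcup_n N_n$ into infinite sets. For each $n$, set
\[
f_n:=\sum_{k\in N_n}(k^2c_k)^{-1/p}\chi_{A_k},\qquad g_n:=\sum_{k\in N_n}(k^2c_k)^{-1/q}\chi_{A_k}.
\]
Disjointness of the $A_k$'s yields $|f_n|^p=\sum_{k\in N_n}(k^2c_k)^{-1}\chi_{A_k}$, and $\sum_k\|(k^2c_k)^{-1}\chi_{A_k}\|_X=\sum_k k^{-2}<\infty$ forces $|f_n|^p\in X$ via PCS, so $f_n\in X^p$; likewise $g_n\in X^q$. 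Writing $s:=r(1/p+1/q)$, the hypothesis $p,q<r$ gives $s>2$, and for each $k\in N_n$ solidity yields
\[
\bigl\||f_ng_n|^r\bigr\|_X\ \ge\ \bigl\|(k^2c_k)^{-s}\chi_{A_k}\bigr\|_X\ =\ k^{-2s}c_k^{\,1-s}\ \ge\ k^{-2s}\,2^{k(s-1)},
\]
which diverges as $k\to\infty$ in $N_n$; hence $f_ng_n\notin X^r$. Since the supports of $f_n$ and $g_n$ sit in $\bigcup_{k\in N_n}A_k$, the pairs are pairwise $\sim$-disjoint, and Theorem~\ref{322} concludes. I expect the main obstacle to be precisely this calibration: the exponents $-1/p,-1/q$ spend the $X^p,X^q$ budgets on the harmless series $\sum k^{-2}$, while their product picks up the extra penalty $(k^2c_k)^{-(s-1)}$ whose explosion—driven by $s>1$ together with $c_k\le 2^{-k}$—forces $f_ng_n$ out of $X^r$.
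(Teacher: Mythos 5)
Your proposal is correct and follows essentially the same route as the paper: both apply Theorem~\ref{322} with the cone $K=\{(f,g):\,fg\in X^r\}$ and a support-disjointness relation on $X^p\times X^q$, and both use Lemma~\ref{Nielsen} to produce weighted sums of the $\chi_{A_k}$ whose product escapes $X^r$. The only differences are cosmetic: you weight the two components by $(k^2c_k)^{-1/p}$ and $(k^2c_k)^{-1/q}$ where the paper uses $c_k^{-1/r}$ in both slots, and you carry out explicitly the splitting into infinitely many $\sim$-disjoint pairs that the paper leaves as ``standard.''
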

\begin{proof}
	Let $1\leq p,q< r$. By Lemma \ref{Nielsen}, there is a sequence $\{A_n\}_{n=1}^\infty$ in $\mathcal{A}_0$ with disjoint terms such that $0<\|\chi_{A_n}\|_X\leq\frac{1}{2^n}$ for all $n\in \mathbb{N}$. We define
	$$j:=\sum_{n=1}^{\infty}\|\chi_{A_n}\|_X^{\frac{-1}{r}}\cdot\chi_{A_n}.$$
	Then, similar to the proof of Lemma \ref{Nielsen}, one can see that $j\in X^p\cap X^q$. In contrast, if $j^2\in X^r$, then we have 
	\begin{align*}
	\|j^2\|_{X^r}&=\||j^2|^r\|_X\\	
	&=\left\|\left(\sum_{n=1}^{\infty}\,\|\chi_{A_n}\|_X^{\frac{-2}{r}}\cdot\chi_{A_n}\right)^r\right\|_X\\
	&\geq\left\|\left\|\chi_{A_k}\right\|_X^{-2}\cdot\chi_{A_k}\right\|_X\geq 2^k
	\end{align*}
	for all $k\in\mathbb{N}$, a contradiction. This implies that setting $$K:=\{(f,g)\in X^p\times X^q:\,fg\in X^r\},$$
	 we have $(j,j)\in (X^p\times X^q)-K$. Put $h:=j^2$. By the above relations, it would be standard to find a sequence $(F_n)$ such that for each distinct $m,n\in\mathbb{N}$, $F_n\cap F_m=\varnothing$, and $h\chi_{F_{n}}\notin X^r$. This implies that $(j\chi_{F_n},j\chi_{F_n})\in (X^p\times X^q)-K$ for all $n\in\mathbb{N}$. Finally, note that the relation $\sim$ defined by 
 $$(f_1,g_1)\sim (f_2,g_2) \text{ if and only if } E_{f_1}\cap E_{f_2}=E_{g_1}\cap E_{g_2}=\varnothing$$
 for all $f_i\in X^p$ and $g_i\in X^q$ ($i=1,2$), satisfies the condition $(D)$. Applying Theorem \ref{322}, the proof is complete.
\end{proof}
Let $\mathcal{H}$ be a separable infinite dimensional Hilbert space and $\{e_j\}_{j\in\mathbb{N}}$ be an orthonormal basis for $\mathcal{H}$. For each non-empty subset $J\subseteq \mathbb{N}$ we let $P_J$ denote the orthogonal projection onto $\overline{{\rm span}}\{e_j\}_{j\in J}$. For each $T,S\in B(\mathcal{H})$, the space of all bounded linear operators on $\mathcal{H}$, we say that $T\sim S$ if there exist two disjoint subsets $J_1,J_2\subseteq \mathbb{N}$ such that $P_{J_1}TP_{J_2}=T$ and $P_{J_2}SP_{J_2}=S$. With these notations, we give the next lemma.
\begin{lemma}\label{bhlemma}
	The relation $\sim$ on $B(\mathcal{H})$ defined above has the property (D).
\end{lemma}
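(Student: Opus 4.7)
The plan is to read $T \sim S$ as a \emph{disjoint block-support} condition: the nonzero matrix entries of $T$ with respect to $\{e_j\}_{j\in\mathbb{N}}$ live in one coordinate block while those of $S$ live in a disjoint block. With this reading, the two clauses of property $(D)$ reduce to elementary support bookkeeping together with norm continuity of left/right multiplication by a fixed projection.

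First I would attach to each operator $T\in B(\mathcal{H})$ its \emph{coordinate support}
$$I_T:=\{\,j\in\mathbb{N}\,:\,TP_{\{j\}}\neq 0 \text{ or } P_{\{j\}}T\neq 0\,\},$$
and observe that any admissible witness $(J_1,J_2)$ for $T\sim S$ forces $I_T\subseteq J_1$ and $I_S\subseteq J_2$, and in particular $I_T\cap I_S=\varnothing$. Conversely, whenever $I_T\cap I_S=\varnothing$ the pair $(J_1,J_2)=(\mathbb{N}\setminus I_S,I_S)$ is already a witness, since $P_{\mathbb{N}\setminus I_S}TP_{\mathbb{N}\setminus I_S}=T$ follows from $P_{I_S}T=0=TP_{I_S}$. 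Thus $T\sim S$ is equivalent to the intrinsic condition $I_T\cap I_S=\varnothing$, and property $(D)$ translates into a statement purely about supports.

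For clause (1) of Definition \ref{dd}, if $T_n\sim T_m$ for all distinct $m,n$, then the previous observation yields pairwise disjointness of the family $\{I_{T_n}\}_{n\in\mathbb{N}}$. Given disjoint finite $A,B\subseteq\mathbb{N}$ and arbitrary scalars, set
$$K_A:=\bigcup_{n\in A}I_{T_n},\qquad K_B:=\bigcup_{m\in B}I_{T_m}.$$
Then $K_A\cap K_B=\varnothing$, each summand $\alpha_nT_n$ is supported in $K_A$, each $\beta_mT_m$ in $K_B$, and hence so are the finite sums themselves. Therefore $(K_A,K_B)$ witnesses $\sum_{n\in A}\alpha_n T_n\sim\sum_{m\in B}\beta_m T_m$.

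For clause (2), suppose $T_n\to T$ in operator norm and $T_n\sim S$ for every $n$. Setting $J:=I_S$, the assumption forces $P_JT_n=0$ and $T_nP_J=0$ for every $n$. Since multiplication by the fixed bounded projection $P_J$ is continuous in operator norm, passing to the limit gives $P_JT=0$ and $TP_J=0$, i.e. $I_T\cap I_S=\varnothing$, whence $T\sim S$. The only delicate step is the initial reformulation — pinning down, from the stated matrix-block equalities, that the supports $I_T,I_S$ really end up in the disjoint index sets $J_1,J_2$; once that is in hand, both clauses follow from standard properties of orthogonal projections and of norm limits.
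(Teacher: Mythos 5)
Your proof is correct, and it takes a genuinely different and cleaner route than the paper's. The paper manipulates the witnessing index sets directly: for clause (1) it builds $E=\bigcup_{i}\bigcap_{r}J_{(n_i,m_r)}$ and its counterpart $F$, invoking the Birman--Solomjak results on products of commuting projections to show these still reduce the relevant operators; for clause (2) it must pass to the strong-operator limit of the decreasing projections $P_{\bigcap_{m\le n}J'_m}$ to manufacture a single witness for $S$. You instead attach to each operator a canonical minimal support $I_T$ and show $T\sim S$ is equivalent to the intrinsic condition $I_T\cap I_S=\varnothing$, after which both clauses reduce to trivial bookkeeping; in particular your clause (2) avoids the limiting projection argument entirely because your witness $I_S$ does not depend on $n$, and only norm-continuity of multiplication by the fixed $P_{I_S}$ is needed. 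The reformulation itself is sound: from $P_{J_1}TP_{J_1}=T$ one gets $P_{\{j\}}T=TP_{\{j\}}=0$ for $j\notin J_1$, hence $I_T\subseteq J_1$ (and likewise $I_S\subseteq J_2$); conversely $I_T\subseteq K$ gives $P_KTP_K=T$ by expanding $P_{\mathbb{N}\setminus K}$ against the basis and using boundedness of $T$, and the same argument yields $P_{I_S}SP_{I_S}=S$, which you should state explicitly since it is half of the witness condition. The only caveat --- one the paper shares, since it defines $P_J$ only for nonempty $J$ --- is the degenerate case where $I_S$ or $\mathbb{N}\setminus I_S$ is empty; adopting the convention $P_\varnothing=0$, or noting that in the intended application all operators have nonzero, proper supports, disposes of it.
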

\begin{proof}
	Suppose that $\{T_n\}_{n\in\mathbb{N}}$ is a sequence in $B(\mathcal{H})$ such that for each distinct $m,n$ we have $T_n\sim T_m$. Let $A:=\{n_1,\ldots,n_k\}$ and $B:=\{m_1,\ldots,m_l\}$ be two disjoint finite subsets of $\mathbb{N}$. Then, for each $n\in A$ and $m\in B$, there exist some disjoint subsets $J_{(n,m)},J'_{(n,m)}\subseteq \mathbb{N}$ such that 
	\begin{equation}\label{qqq}
	P_{J_{(n,m)}}T_nP_{J_{(n,m)}}=T_n\qquad\text{and}\qquad P_{J_{(n,m)}'}T_mP_{J_{(n,m)}'}=T_m.
	\end{equation}
	  By \cite[Chapter 2, Section 8, Theorem 4]{bir}, we have 
	\begin{equation}\label{lll}
	P_{\bigcap_{r=1}^lJ_{(n,m_r)}}=P_{J_{(n,m_1)}}P_{J_{(n,m_2)}}\ldots P_{J_{(n,m_l)}}
	\end{equation}
	 for all $n\in A$. Then, \eqref{qqq} implies that 
	\begin{equation}\label{mmm}
	P_{\bigcap_{r=1}^lJ_{(n,m_r)}}T_nP_{\bigcap_{r=1}^lJ_{(n,m_r)}}=T_n
	\end{equation}
	for all $n\in A$. Indeed, by \eqref{lll} we have 
		\begin{align*}
		P_{\bigcap_{r=1}^lJ_{(n,m_r)}}T_n&P_{\bigcap_{r=1}^lJ_{(n,m_r)}}\\
		&=P_{J_{(n,m_l)}}P_{J_{(n,m_{l-1})}}\ldots P_{J_{(n,m_1)}}T_nP_{J_{(n,m_1)}}P_{J_{(n,m_2)}}\ldots P_{J_{(n,m_l)}}\\
		&=T_n
		\end{align*}
	for all $n\in A$. Put
	$$E:=\bigcup_{i=1}^k\bigcap_{r=1}^lJ_{(n_i,m_r)}.$$
	Then, by \cite[Chapter 2, Section 8, Corollary 5]{bir}, we have 
	$$P_EP_{\bigcap_{r=1}^lJ_{(n_i,m_r)}}=P_{\bigcap_{r=1}^lJ_{(n_i,m_r)}}P_E=P_{\bigcap_{r=1}^lJ_{(n_i,m_r)}}$$
	for every $i\in\{1,\ldots,k\}$. Because of \eqref{mmm}, 
	$$P_ET_{n_i}P_E=T_{n_i}$$ for all $i\in\{1,\ldots,k\}$. This implies that 
	$P_E(\sum_{i=1}^k\alpha_i\,T_{n_i})P_E=\sum_{i=1}^k\alpha_i\,T_{n_i}$ for all $\alpha_1,\ldots,\alpha_k\in\mathbb{C}$. Similarly, setting $F:=\bigcup_{r=1}^l\bigcap_{i=1}^kJ'_{(n_i,m_r)}$ we have 
	$P_F(\sum_{r=1}^l\beta_i\,T_{m_r})P_F=\sum_{r=1}^l\beta_i\,T_{m_r}$
	for all $\beta_1,\ldots,\beta_l\in\mathbb{C}$. Now, since $J_{(n_i,m_r)}\cap J_{(n_i,m_r)}'=\varnothing$ for each $i\in\{1,\ldots,k\}$ and $r\in\{1,\ldots,l\}$, easily we have $E\cap F=\varnothing$. Therefore, $\sim$ satisfies the condition (1) in Definition \ref{dd}. Next, suppose that $S\in B(\mathcal{H})$ and  $\{T_n\}_{n\in\mathbb{N}}$ is a sequence in $B(\mathcal{H})$ such  that $T_n\rightarrow T$, in operator norm, for some $T\in B(\mathcal{H})$,  and $T_n\sim S$ for all $n$. Then, for each $n\in\mathbb{N}$, there are disjoint subsets $J_n,J_n'\subseteq \mathbb{N}$ such that
	\begin{equation}\label{zzz}
	P_{J_n}T_nP_{J_n}=T_n\qquad\text{and}\qquad P_{J_n'}SP_{J_n'}=S.
	\end{equation}
	  Again, by \cite[Chapter 2, Section 8, Theorem 4]{bir} for each $n\in\mathbb{N}$ we have 
	\begin{equation}\label{nnn}
	P_{\bigcap_{m=1}^nJ_m'}SP_{\bigcap_{m=1}^nJ_m'}=P_{J_n'}\ldots P_{J_1'}SP_{J_1'}\ldots P_{J_n'}=S.
	\end{equation}
		 Now, the sequence $\{P_{\cap_{m=1}^nJ_m'}\}_{n\in\mathbb{N}}$ is a non-increasing sequence of orthogonal projections, hence by \cite[Chapter 2, Section 8, Theorem 6]{bir},
				$$s-\lim_{n\rightarrow\infty}P_{\cap_{m=1}^nJ_m'}=P_{\cap_{m=1}^\infty J_m'},$$
				where $s-\lim$ means the limit in the strong operator topology.
				From \eqref{nnn} and thanks to \cite[Chapter 2, Section 5, Theorem 2]{bir} we have 
				\begin{align*}
				S&=s-\lim_{n\rightarrow\infty}(P_{\cap_{m=1}^nJ_m'}SP_{\cap_{m=1}^nJ_m'})\\
				&=(s-\lim_{n\rightarrow\infty}P_{\cap_{m=1}^nJ_m'})S(s-\lim_{n\rightarrow\infty}P_{\cap_{m=1}^nJ_m'})\\
				&=P_{\cap_{m=1}^\infty J_m'}SP_{\cap_{m=1}^\infty J_m'}.
				\end{align*}
By \eqref{zzz} and 	\cite[Chapter 2, Section 8, Corollary 5]{bir} we have  
$$P_{\bigcup_{m=1}^\infty J_m}T_n P_{\bigcup_{m=1}^\infty J_m}=T_n$$
for all $n\in\mathbb{N}$. Letting $n\rightarrow\infty$ we get 
$$P_{\bigcup_{m=1}^\infty J_m}T P_{\bigcup_{m=1}^\infty J_m}=T,$$
and this completes the proof because $(\bigcup_{m=1}^\infty J_m)\cap (\bigcap_{m=1}^\infty J_m')=\varnothing$.	
\end{proof}
\begin{definition}
	Let $K$ be a cone in $B(\mathcal{H})$. We denote 
	\begin{equation}
	\tilde{K}:=\bigcup_{J\subseteq \mathbb{N}}P_JKP_J,
	\end{equation}
	where $P_JKP_J:=\{P_JTP_J:\,T\in K\}$.
\end{definition}
Note that if $K$ is a cone, then $\tilde{K}$ is a cone as well. Moreover, $P_J\tilde{K}P_J\subseteq\tilde{K}$ for all $J\subseteq \mathbb{N}$, and in particular, $K\subseteq \tilde{K}$.
\begin{theorem}\label{lemma}
	Let $K$ be a cone in $B(\mathcal{H})$. If there exists a sequence of mutually disjoint subsets $\{J_n\}_{n\in\mathbb{N}}$ of $\mathbb{N}$ satisfying that $P_{J_n}\tilde{K}P_{J_n}\neq P_{J_n}B(\mathcal{H})P_{J_n}$ for all $n\in\mathbb{N}$, then $B(\mathcal{H})-\tilde{K}$ (and consequently $B(\mathcal{H})-K$) is spaceable in $B(\mathcal{H})$. The statement holds if we consider $B_0(\mathcal{H})$ instead of $B(\mathcal{H})$.
\end{theorem}
\begin{proof}
	We show that the relation $\sim$ defined before Lemma \ref{bhlemma} satisfies the conditions in Theorem \ref{322} regarding the cone $\tilde{K}$.
Suppose that $T,S\in B(\mathcal{H})$ with $T\sim S$. Then, there exist disjoint subsets $J,J'\subseteq \mathbb{N}$ such that $P_JTP_J=T$ and $P_{J'}SP_{J'}=S$. By disjointness of $J$ and $J'$, from \cite[Chapter 2, Section 8, Theorem 2]{bir} we have 
$P_JSP_J=P_{J}P_{J'}SP_{J'}P_{J}=0$. We get 
\begin{align*}
\|T+S\|&\geq \|P_J\|\,\|T+S\|\,\|P_J\|\\
&\geq \|P_J(T+S)P_J\|\\
&=\|P_JTP_J\|\\
&=\|T\|.
\end{align*}
This shows that the relation $\sim$ satisfies the condition (1) of Theorem \ref{322}. Now, if in addition $T+S\in \tilde{K}$, we have
\begin{align*}
T&= P_J TP_J\\
&=P_J(T+S)P_J\in \tilde{K}.
\end{align*}
Similarly, $S\in \tilde{K}$. So, the condition (2) in Theorem \ref{322} holds with respect to the cone $\tilde{K}$. Finally, consider the sequence $\{J_n\}$ of mutually disjoint subsets of $\mathbb{N}$ which was described in the assumptions. So, for each $n$ we can choose an operator $T_n\in P_{J_n}B(\mathcal{H})P_{J_n}-P_{J_n}\tilde{K}P_{J_n}$. Then, easily  one can see that $\{T_n\}_{n\in\mathbb{N}}\subset B(\mathcal{H})-\tilde{K}$ and  for each distinct $m,n\in\mathbb{N}$ we have 
$$T_n=P_{J_n}T_nP_{J_n}\sim P_{J_m}T_mP_{J_m}=T_m,$$
and this completes the proof.
\end{proof}
\begin{corollary}
	The set of all bounded linear operators on $\mathcal{H}$ which are not positive-semidefinite, is spaceable in $B(\mathcal{H})$.
\end{corollary}
\begin{proof}
	Let $K$ be the set of all scalar multiplicative of positive semidefinite operators on $\mathcal{H}$. Then, $K$ is a cone and $P_JKP_J\subseteq K$	for all $J\subseteq \mathbb{N}$ and so $\tilde{K}=K$. By some calculations one can see that the assumptions of Theorem \ref{lemma} hold in this situation, and therefore $B(\mathcal{H})-K$ is spaceable. This implies that $B(\mathcal{H})-B_0(\mathcal{H})$ is spaceable as well.
\end{proof}
The following result is directly concluded from Theorem \ref{lemma}.
\begin{corollary}\label{cor49}
	If $K$ is a two sided ideal cone in $B(\mathcal{H})$ and there exists a sequence of mutually disjoint subsets $\{J_n\}_{n\in\mathbb{N}}$ of $\mathbb{N}$ satisfying the condition $P_{J_n}KP_{J_n}\neq P_{J_n}B(\mathcal{H})P_{J_n}$ for all $n\in\mathbb{N}$, then $B(\mathcal{H})-K$ is spaceable in $B(\mathcal{H})$.
\end{corollary}
\begin{example}\label{example}
		$B_0(\mathcal{H})$, the space of all compact operators on $\mathcal{H}$, is a two sided ideal cone in $B(\mathcal{H})$ which satisfies the requirements of  Corollary \ref{cor49}. So, the set of all non-compact operators is a spaceable subset of $B(\mathcal{H})$.
\end{example}
\begin{remark}
	By the same argument, $B_0(\mathcal{H})-(B_+(\mathcal{H})\bigcap B_0(\mathcal{H}))$ is spaceable in $B_0(\mathcal{H})$, where $B_+(\mathcal{H})$ is the set of all positive semidefinite operators on $\mathcal{H}$.  One can also replace $B_0(\mathcal{H})$ by the real Banach space of all Hermitian operators on $\mathcal{H}$.
\end{remark}
\begin{remark}
	Let $(B_1(\mathcal{H}),\|\cdot\|_1)$ and $(B_2(\mathcal{H}),\|\cdot\|_2)$ denote the Banach space of all trace-class operators equipped with the trace norm and the Banach space of all Hilbert-Schmidt operators equipped with the Hilbert-Schmidt norm, respectively. Since for every $T\in B(\mathcal{H})$, $S\in B_1(\mathcal{H})$ and $G\in B_2(\mathcal{H})$ we have $\|ST\|_1,\|TS\|_1\leq \|T\|\,\|S\|_1$ and 
	$\|TG\|_2,\|GT\|_2\leq \|T\|\,\|G\|_2$, it is not hard to see that the similar argument as in the proof of Lemma  \ref{bhlemma} and Theorem \ref{lemma} can be applied to deduce that $B_1(\mathcal{H})-(B_+(\mathcal{H})\bigcap B_1(\mathcal{H}))$ and $B_2(\mathcal{H})-(B_+(\mathcal{H})\bigcap B_2(\mathcal{H}))$ are spaceable in $B_1(\mathcal{H})$ and $B_2(\mathcal{H})$, respectively. Also, 	$B_0(\mathcal{H})-B_1(\mathcal{H})$ and $B_0(\mathcal{H})-B_2(\mathcal{H})$  are both spaceable in $B_0(\mathcal{H})$.
\end{remark}

\bibliographystyle{amsplain}

\end{document}